\newcommand{\RR}{\mathbb{R}}
\newcommand{\TAB}{\hspace{0.50cm}}
\newcommand{\IFF}{\Leftrightarrow}
\newcommand{\IMP}{\Rightarrow}
\newcommand{\cm}{\mathfrak{c}}
\newcommand{\mc}{\mathcal}
\newcommand{\mf}{\mathfrak}
\newcommand{\bez}{\backslash}
\newcommand{\se}{\subseteq}
\newcommand{\lc}{\lceil}
\newcommand{\rc}{\rceil}
\newtheorem{mydef}{Definition}[section]
\newtheorem{myth}{Theorem}[section]
\newtheorem{mylemma}{Lemma}[section]
\newtheorem{myremark}{Remark}[section]
\titleformat*{\section}{\filcenter}
\title{Zacny tytuł}
\date{}
\author{Marcin Michalski}
\begin{document}
\thispagestyle{empty}
\begin{flushright}
\textit{\footnotesize Key Words:}
\\
\textit{\footnotesize $\sigma$-ideal, nonmeasurable set, Luzin set, Fubini property, ccc ideal, tall ideal, Smital property }
\end{flushright}
\hfill
\begin{flushleft}
Marcin MICHALSKI\footnote{Wrocław University of Science and Technology, Wybrzeże Wyspiańskiego 27, 50-370 Wrocław. The author was supported by grant S50129/K1102 (0401/0230/15), Faculty of Fundamental Problem of Technology.}
\end{flushleft}
\vspace{0.5cm}
\begin{center}
\textbf{{\fontsize{13}{13}\selectfont ON SOME RELATIONS BETWEEN PROPERTIES OF INVARIANT $\sigma$-IDEALS IN POLISH SPACES}}
\end{center}
\vspace{0.5cm}
\begin{adjustwidth}{0.5cm}{0pt}
\footnotesize In this paper we shall consider a couple of properties of $\sigma$-ideals and study relations between them. Namely we will prove that $\cm$-cc $\sigma$-ideals are tall and that the Weaker Smital Property implies that every Borel $\mc{I}$-positive set contains a witness for non($\mc{I}$) as well, as satisfying ccc and Fubini Property. We give also a characterization of nonmeasurability of  $\mc{I}$-Luzin sets and prove that the ideal $[\RR]^{\leq\omega}$ does not posses the Fubini Property using some interesting lemma about perfect sets.
\end{adjustwidth}
\vspace{0.5cm}
\section{INTRODUCTION, DEFINITIONS AND NOTATION}

\TAB In  \cite{MSZ} the authors achieved a somehow unexpected result that so called Weaker Smital Property implies that  $\mc{I}$-Luzin sets are  $\mc{I}$-nonmeasurable. This fact was a motivation to seek more precise characterization of $\mc{I}$-nonmeasurability of $\mc{I}$-Luzin sets and establish relations between other properties of $\sigma$-ideals.
We will use standard set theoretic notation as in \cite{JECH}. We denote the real line by $\RR$ and for any $n\in \omega$ (which denotes the set of natural numbers) and a set $A$ we define the Cartesian product by $A^n$ . By  $\alpha, \beta, \gamma, \kappa$, $…$ we denote ordinal numbers with the special case of $\cm$, which stands for the continnum - the cardinality of  $\RR$. A cardinality of any set $A$ we will denote by $|A|$. If for a set $A$ we have $|A|\leq \omega$, then we say that $A$ is countable. Otherwise we shall say it is uncountable. By $CH$ we will denote the continuum hypothesis - a statement that there is no cardinal $\kappa$ such that $\omega<\kappa<\cm$. Let $X$ be a Polish space. By $\mc{I}\se P(X)$ we will denote a $\sigma$-ideal, i.e. a family of sets which is closed under taking subsets and countable unions. For any sets $A, B$ we define their algebraic sum as follows:
\begin{linenomath*}
$$
A+B=\{a+b: a\in A\ \ \&\ \ b\in B\}
$$
\end{linenomath*}
Throughout the paper we will consider only $\sigma$-ideals that are nontrivial ($X\notin\mc{I}$), contain all singletons, and are invariant, i.e. for any $x\in X$ and $A\in\mc{I}$ we have $-A+x\in\mc{I}$ ($-A=\{-a: a\in A\}$), and possess a Borel base - for every $A\in\mc{I}$ there exists a Borel set $B\in\mc{I}$ such that $A\se B$. For such ideals we will consider one of the cardinal coefficients from Cichoń's Diagram:
\begin{linenomath*}
$$
non(\mc{I})=min\{|A|: A\notin\mc{I}\ \ \&\ \ A\se X\}.
$$
\end{linenomath*}
We call a Borel set $B$ $\mc{I}$-positive, if $B\notin\mc{I}$ . A set $B$ such that $B^c\in \mc{I}$ will be called $\mc{I}$-residual.
For a given $\sigma$-ideal $\mc{I}$ we will consider the following properties.

\begin{mydef}
We say that $\mc{I}$
\begin{itemize}
\item is $ccc$, if every family of pairwise disjoint $\mc{I}$-positive Borel sets is countable;
\item is $\kappa$-$cc$, if every family of pairwise disjoint $\mc{I}$-positive Borel sets has a cardinality smaller than $\kappa$;
\item is tall, if each $\mc{I}$-positive Borel set contains a perfect subset from $\mc{I}$;
\item has the Weaker Smital Property (see \cite{BFN}), if there is a countable and dense set $D$ such that for every Borel $\mc{I}$-positive set $B$ the sum $D+B$ is $\mc{I}$-residual (we say then, that $D$ witnesses the Weaker Smital Property).
\end{itemize}
\end{mydef}
To illustrate these properties let us observe that the two classic examples of  $\sigma$-ideals on the real line - the family of meager sets $\mc{M}$ and the family o Lebesgue measure zero sets $\mc{N}$ have all of the above properties. Possessing the Weaker Smital Property follows by the Picard and the Steinhaus Theorem respectively and being tall is assured by the existence in every infinite Borel set some type of a Cantor set.
\begin{mydef}
We say, that a set $A$
\begin{itemize}
\item witnesses $non(\mc{I})$, if $A\notin\mc{I}$ and $|A|=non(\mc{I})$;
\item is $\mc{I}$-nonmeasurable, if $A\notin\sigma(\mc{B}\cup\mc{I})$- a  $\sigma$-field generated by the family of Borel sets and $\mc{I}$;
\item is an $\mc{I}$-Luzin set, if for every set $I\in\mc{I}$ we have $|A\cap I|<|A|$.
\end{itemize}
\end{mydef}
$M$-Luzin sets are usually called in the literature generalized Luzin sets and $N$-Luzin sets generalized Sierpiński sets. It is easy to check  that Luzin sets do not have the Baire Property and Sierpiński sets are nonmeasurable. On the other hand, if  $\mc{I}=[\RR]^{\leq\omega}$, $\mc{I}$-Luzin sets can be $\mc{I}$-measurable (for instance the real line is an $[\RR]^{\leq\omega}$-Luzin set).
Also, we will be concerned with so called product ideals. For $\sigma$-ideals $\mc{I}$ and $\mc{J}$ of spaces $X$ and $Y$ respectively we define a Fubini product of these $\sigma$-ideals as follows:
\begin{linenomath*}
$$
A\in \mc{I}\otimes \mc{J}\IFF(\exists B\in\mc{B})(A\subseteq B\ \ \&\ \ \{x\in X: B_x\notin\mc{J}\}\in\mc{I}),
$$
\end{linenomath*}
where $B_x=\{y: (x,y)\in B\}$ is a vertical slice of $B$ in point $x\in X$. We say that a pair of $\sigma$-ideals $(\mc{I},\mc{J})$ has the Fubini property, if for every Borel set $B\se X\times Y$ we have $\{x\in X: B_x\notin \mc{J}\}\in\mc{I}\IMP \{y\in Y: B^y\notin \mc{I}\}\in\mc{J}$. We will say that $\mc{I}$ satisfies the Fubini Property, if $(\mc{I},\mc{I})$ has it.

Now we may proceed with the results.

\section{ON PROPERTIES OF $\sigma$-IDEALS}

We will be concerned with the following properties: ccc, being tall, the Weaker Smital Property and possessing a witness for $non(\mc{I})$ in every Borel $\mc{I}$-positive set.
We will start with a nice characterization of  $\mc{I}$-nonmeasurability of  $\mc{I}$-Luzin sets by tallness.
\begin{myth}
Every $\mc{I}$-Luzin set is $\mc{I}$-nonmeasurable if and only if $\mc{I}$ is tall.
\end{myth}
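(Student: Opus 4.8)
The plan is to prove both implications after recording a convenient description of $\mc{I}$-measurability. Since $\mc{I}$ has a Borel base, a routine closure argument shows that $A\in\sigma(\mc{B}\cup\mc{I})$ if and only if there is a Borel set $B$ with $A\triangle B\in\mc{I}$, and then a Borel set $D\in\mc{I}$ with $A\triangle B\se D$. I will also use the trivial but crucial remark that every $\mc{I}$-Luzin set $A$ satisfies $A\notin\mc{I}$: applying the defining inequality to $I=A\in\mc{I}$ would give $|A|<|A|$. In particular such an $A$ is uncountable (as $\mc{I}$ contains all countable sets), so $|A|=\cm$ as soon as $A$ contains an uncountable Borel set.

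For the implication that tallness forces nonmeasurability I argue by contradiction: assume $\mc{I}$ is tall and $A$ is an $\mc{I}$-Luzin set that is $\mc{I}$-measurable, and fix $B$ Borel and $D\in\mc{I}$ Borel with $A\triangle B\se D$. Off $D$ the sets $A$ and $B$ coincide, so $B\bez D=A\bez D\se A$; moreover $B\bez D\notin\mc{I}$, since otherwise $A=(A\bez D)\cup(A\cap D)$ would lie in $\mc{I}$, contradicting the remark above. Thus $A$ contains the $\mc{I}$-positive Borel set $B'=B\bez D$. Now tallness applies to $B'$, producing a perfect set $P\in\mc{I}$ with $P\se B'\se A$. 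Then $|A\cap P|=|P|=\cm$, while $B'\se A$ forces $|A|=\cm$, so $|A\cap P|=|A|$, contradicting that $A$ is $\mc{I}$-Luzin.

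For the converse I prove the contrapositive: if $\mc{I}$ is not tall I exhibit a measurable $\mc{I}$-Luzin set. Choose a witness to non-tallness, i.e. an $\mc{I}$-positive Borel set $B_0$ no perfect subset of which belongs to $\mc{I}$. The key observation is that every Borel subset of $B_0$ lying in $\mc{I}$ must be countable: an uncountable Borel set has the perfect set property, and a perfect subset of such a set would be a perfect subset of $B_0$ inside $\mc{I}$. Using the Borel base, any $I\in\mc{I}$ is contained in a Borel set $D\in\mc{I}$, so $I\cap B_0\se D\cap B_0$ is a Borel subset of $B_0$ in $\mc{I}$, hence countable. Therefore $|B_0\cap I|\le\omega<\cm=|B_0|$ for every $I\in\mc{I}$, which says exactly that the Borel set $B_0$ is itself an $\mc{I}$-Luzin set; being Borel, it is trivially $\mc{I}$-measurable.

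I expect the only real content to lie in this last observation — that the non-tall witness is automatically $\mc{I}$-Luzin — since it is what ties the equivalence to the perfect set property for Borel sets. The forward direction, once one notices that measurability forces $A$ to contain an $\mc{I}$-positive Borel set, is then an immediate application of tallness.
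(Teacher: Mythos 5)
Your proof is correct and takes essentially the same route as the paper: for one direction you show a measurable $\mc{I}$-Luzin set must contain an $\mc{I}$-positive Borel set to which tallness applies, producing a perfect $P\in\mc{I}$ with $|A\cap P|=\cm=|A|$, and for the other you show a witness to non-tallness is itself a Borel (hence measurable) $\mc{I}$-Luzin set via the perfect set property of Borel sets. If anything, your forward direction is more careful than the paper's, which applies tallness to $B$ directly and asserts $|L\cap P|=\cm$ without first removing a Borel hull $D\in\mc{I}$ of the symmetric difference (a perfect subset of $B$ could sit inside $I$ and miss $L$ entirely); your passage to $B\bez D\se A$ is exactly the step that closes this gap.
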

\begin{proof}
$"\Leftarrow"$: Suppose that there is an $\mc{I}$-measurable $\mc{I}$-Luzin set $L$. Then there exist $B\in\mc{B}$ and $I\in\mc{I}$ such that $L=B\Delta I$. $B\notin\mc{I}$ so there exists perfect set $P\se B$ from $\mc{I}$ for which $|L\cap P|=\mf{c}$.
\\
$"\Rightarrow"$: Suppose the opposite, i.e. there is a Borel set $B\notin\mc{I}$ containing no perfect set from $\mc{I}$. We shall prove that such a set is an $\mc{I}$-Luzin set. If it is not then there exists $I\in\mc{I}$ (we may assume that it is Borel) such that $|B\cap I|=\mf{c}$. $B\cap I$ is a Borel set from $\mc{I}$, hence it contains a perfect subset from $\mc{I}$,  a contradiction. So indeed $B$ is an $\mc{I}$-Luzin set and, by the assumption, it is $\mc{I}$-nonmeasurable and Borel. The contradiction completes the proof.
\end{proof}

It seems that being tall is rather common property, which is reflected by the following theorem.
\begin{myth}
If $\mc{I}$ is ccc, then $\mc{I}$ is tall.
\end{myth}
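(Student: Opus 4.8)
The plan is to produce the required small perfect set directly, by slicing a perfect subset of $B$ into continuum many pairwise disjoint Borel pieces and then invoking ccc to locate the pieces that fall into $\mc{I}$. First I would fix a Borel $\mc{I}$-positive set $B$ and note that $B$ is uncountable: since singletons belong to $\mc{I}$ and $\mc{I}$ is a $\sigma$-ideal, every countable set lies in $\mc{I}$, so a positive set cannot be countable. As $B$ is then an uncountable Borel subset of a Polish space, the perfect set property yields a perfect set $C\se B$.

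Next I would decompose $C$ into continuum many pairwise disjoint perfect pieces. Fixing a homeomorphism of $C$ onto $2^\omega$ and using the canonical homeomorphism $2^\omega\cong 2^\omega\times 2^\omega$, the fibers $\{x\}\times 2^\omega$ (for $x\in 2^\omega$) are pairwise disjoint, closed—hence Borel—and perfect. Transporting them back through the homeomorphisms gives pairwise disjoint perfect Borel sets $\{C_x : x\in 2^\omega\}$ with each $C_x\se C\se B$.

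Finally I would apply the ccc assumption to the family $\{C_x : x\in 2^\omega\}$. These sets are pairwise disjoint and Borel, so at most countably many of them can be $\mc{I}$-positive; since there are $\mf{c}$ of them, uncountably many (in fact all but countably many) satisfy $C_x\in\mc{I}$. Any such $C_x$ is a perfect subset of $B$ lying in $\mc{I}$, which is exactly what tallness demands. The only point requiring care is the cutting step—arranging the pieces to be simultaneously perfect, pairwise disjoint, Borel, and continuum in number—but this is delivered cleanly by the product structure of $2^\omega$, after which ccc does the rest.
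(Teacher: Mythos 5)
Your argument is correct and is essentially the paper's proof, phrased directly rather than by contradiction: both hinge on splitting a perfect subset of $B$ into $\mf{c}$ pairwise disjoint perfect Borel pieces and then letting the ccc force one of them into $\mc{I}$. One small repair: $C$ itself need not be homeomorphic to $2^\omega$ (it could be, say, a closed interval), so pass first to a Cantor subset of $C$ --- which the perfect set property in fact yields directly --- before transporting the fibers of $2^\omega\times 2^\omega$.
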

\begin{proof}
Suppose that $\mc{I}$ is ccc, but not tall. Then there is a Borel $\mc{I}$-positive set $B$ such that every its perfect subset is not in $\mc{I}$. Since every perfect set can be divided into $\cm$ perfect subsets, we may find $\cm$ many pairwise disjoint Borel sets outside the $\mc{I}$. Contradiction completes the proof.
\end{proof}

\begin{myremark}
Actually we have shown that if $\mc{I}$ is $\mf{c}$-cc, then $\mc{I}$ is tall.
\end{myremark}

\begin{myth}
The Weaker Smital Property implies that every Borel $\mc{I}$-positive set contains a witness for $non(\mc{I})$.
\end{myth}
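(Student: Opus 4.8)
The plan is to begin from an arbitrary witness for $non(\mc{I})$ placed anywhere in the space and then use the Weaker Smital Property, together with invariance, to transport an $\mc{I}$-positive fragment of it inside the prescribed Borel set. Fix a countable dense set $D=\{d_n:n\in\omega\}$ witnessing the Weaker Smital Property, let $B$ be a Borel $\mc{I}$-positive set, and choose any $W\se X$ with $W\notin\mc{I}$ and $|W|=non(\mc{I})$.

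First I would apply the Weaker Smital Property to $B$. By definition $D+B$ is $\mc{I}$-residual, so $(D+B)^c\in\mc{I}$, and therefore $W\bez(D+B)\se(D+B)^c$ lies in $\mc{I}$. Since $W\notin\mc{I}$, the remaining part $W\cap(D+B)$ must be $\mc{I}$-positive. Now $W\cap(D+B)=\bigcup_{n\in\omega}\big(W\cap(d_n+B)\big)$, so using that $\mc{I}$ is a $\sigma$-ideal I would fix an index $n$ for which $W_n:=W\cap(d_n+B)$ does not belong to $\mc{I}$.

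It remains to slide $W_n$ back into $B$. Because $W_n\se d_n+B$, translating $W_n$ by $-d_n$ produces a set $A\se B$. Invariance of $\mc{I}$ gives closure under arbitrary translations (and under negation), so $A\notin\mc{I}$: were $A\in\mc{I}$, translating back by $d_n$ would force $W_n\in\mc{I}$, a contradiction. Since a translation is a bijection, $|A|=|W_n|$; and as $W_n\se W$ is $\mc{I}$-positive we get $non(\mc{I})\le|W_n|\le|W|=non(\mc{I})$, hence $|A|=non(\mc{I})$. Thus $A\se B$ witnesses $non(\mc{I})$.

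The one substantive idea is this transport step: a witness for $non(\mc{I})$ has no reason to meet $B$ at all, and the Weaker Smital Property is exactly what forces a positive portion of any such witness into some translate $d_n+B$; $\sigma$-additivity then pins this down to a single translate, and invariance lets us push it back into $B$. Everything else is bookkeeping, so I expect no real difficulty beyond setting up this maneuver correctly.
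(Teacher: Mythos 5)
Your proof is correct and takes essentially the same approach as the paper: apply the Weaker Smital Property to see that a witness $W$ for $non(\mc{I})$ meets $D+B$ outside $\mc{I}$, use countability of $D$ to isolate a single translate with $W\cap(d_n+B)\notin\mc{I}$, and use invariance to slide this set back into $B$. Your write-up is in fact slightly more explicit than the paper's at the final step, where you verify $non(\mc{I})\le|W_n|\le|W|=non(\mc{I})$ rather than leaving the cardinality argument implicit.
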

\begin{proof}
Let $B$ be an $\mc{I}$-positive set and $A\notin \mc{I}$. Let $D$ be a witness for the Weaker Smital Property. Then $B+D$ is $\mc{I}$-residual, so $A$ is almost entirely (besides a set from $\mc{I}$) contained in $B+D$ (otherwise it would be a subset of $(B+D)^c$ which lies in $\mc{I}$ be definition). Since
\begin{linenomath*}
$$
(B+D)\cap A=\bigcup_{d\in D}\big((B+d)\cap A\big)
$$
\end{linenomath*}
and $D$ is countable, not for all $d\in D$ $(B+d)\cap A$ is in $\mc{I}$, so there is $d\in D$ such that $(B+d) \cap A$ is outside of $\mc{I}$. Now, if $A$ has a cardinality of $non(\mc{I})$, then the part of $A-d$ that  is in $B$ cannot be of any smaller cardinality (otherwise $(A-d)\cap B$ would be in $\mc{I}$), so $(A-d)$ is the desired set.
\end{proof}

\begin{myth}
If $\mc{I}$ is ccc and satisfies the Fubini Property, then every Borel $\mc{I}$-positive set has a witness for $non(\mc{I})$.
\end{myth}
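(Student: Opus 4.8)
The plan is to fix a set $W$ witnessing $non(\mc{I})$ (so $|W|=non(\mc{I})$ and $W\notin\mc{I}$) and then to hunt for a single $z\in X$ with $(W-z)\cap B\notin\mc{I}$. Such an intersection immediately finishes the argument: it is contained in $B$; being $\mc{I}$-positive, its cardinality is at least $non(\mc{I})$; and it lies inside the translate $W-z$, which has cardinality $non(\mc{I})$. So by a cardinality squeeze it has cardinality exactly $non(\mc{I})$ and is therefore a witness for $non(\mc{I})$ sitting inside $B$. Everything thus reduces to producing one good translation parameter $z$.

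The main obstacle is that $W$ is not Borel, so the Fubini Property — which only speaks about Borel subsets of $X\times X$ — cannot be applied to any product set built directly out of $W$. This is exactly where ccc enters. Using that $\mc{I}$ is ccc with a Borel base, I would first fix a Borel hull $H\es W$ enjoying the transfer property that every $\mc{I}$-positive Borel $C\se H$ satisfies $W\cap C\notin\mc{I}$. Such an $H$ is produced by choosing a maximal family of pairwise disjoint $\mc{I}$-positive Borel sets disjoint from $W$, which is countable by ccc, and letting $H$ be the complement of its union; maximality together with the Borel base then yields the transfer property.

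With the hull at hand I would apply the Fubini Property to the Borel set
$$
E=\{(z,y)\in X\times X: y\in B\ \ \&\ \ y+z\in H\},
$$
Borel because it is an intersection of preimages of $B$ and of $H$ under continuous maps. Its horizontal slice is $E^y=H-y$ for $y\in B$ and empty otherwise, so by invariance $\{y:E^y\notin\mc{I}\}=B\notin\mc{I}$; the contrapositive of the Fubini Property then forces $\{z:E_z\notin\mc{I}\}\notin\mc{I}$, where $E_z=B\cap(H-z)$. In particular there is some $z$ with $(H-z)\cap B\notin\mc{I}$, equivalently $C:=H\cap(B+z)\notin\mc{I}$.

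Finally I would feed $C$ into the transfer property of the hull: since $C$ is an $\mc{I}$-positive Borel subset of $H$, we obtain $W\cap C=W\cap(B+z)\notin\mc{I}$, and translating by $-z$ (using that invariance under the operation $A\mapsto -A+x$ yields invariance under plain translation) gives $(W-z)\cap B\notin\mc{I}$, closing the reduction of the first paragraph. I expect the delicate point to be the hull step: one must check that the chosen maximal disjoint family is genuinely exhausting, so that no $\mc{I}$-positive Borel piece of $H$ can avoid $W$ modulo $\mc{I}$. It is precisely this fact that lets positivity be transported from the Borel world back onto the non-Borel witness $W$.
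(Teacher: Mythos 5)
Your argument is correct, and it takes a genuinely different route from the paper's. The paper works in the opposite direction: it fixes $A\notin\mc{I}$ with $|A|=non(\mc{I})$ and moves $B$ toward $A$, applying the Fubini Property (in the same contrapositive form you use) to the set $C=\{(x,y): y\in (x+B)\bez B\}$, whose horizontal slices are translates $y-B$ over the positive set of points $y$ outside everything built so far; this yields a translate of $B$ that is $\mc{I}$-positive off the previously constructed sets, and the paper iterates transfinitely, building pairwise disjoint positive Borel sets $B_\alpha=(x_\alpha+B)\bez\bigcup_{\beta<\alpha}B_\beta$ and invoking ccc to force the recursion to stop at a countable stage $\kappa$ with $B_\kappa\cap A\notin\mc{I}$, after which translating by $-x_\kappa$ gives the witness inside $B$. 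You instead move the witness $W$ toward $B$, and you use ccc exactly once and in a different role: to make your maximal disjoint family countable, so that the hull $H$ is Borel and a single application of Fubini to $E$ suffices. Your discharge of the ``delicate point'' is also right: given a positive Borel $C\se H$ with $W\cap C\in\mc{I}$, the Borel base provides a Borel $I\in\mc{I}$ covering $W\cap C$, and then $C\bez I$ is a positive Borel set disjoint from $W$ and from every member of the family, contradicting maximality --- this hull is the standard ccc ``Borel envelope'' of a non-Borel positive set. As for what each approach buys: your version avoids transfinite recursion entirely and in particular sidesteps the paper's tersest step, the parenthetical claim that if every $B_\alpha\cap A$ were in $\mc{I}$ then $A\in\mc{I}$, which really rests on the observation that otherwise the recursion could be continued through $\omega_1$ stages and produce an uncountable disjoint family of positive Borel sets, contradicting ccc; the paper's version, on the other hand, needs no envelope lemma and manipulates only translates of the given Borel set $B$. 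Both proofs use translation invariance (derived, as you correctly note, from invariance under $A\mapsto -A+x$) and conclude with the same cardinality squeeze on an $\mc{I}$-positive subset of a translate of the witness.
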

\begin{proof}
Let $B$ be a Borel $\mc{I}$-positive set and let $A\notin\mc{I}$ has the cardinality of $non(\mc{I})$. Let us assume that $A\cap B\in\mc{I}$ (otherwise we are done). We shall find $x\in X$ such that $(x+B)\bez B\notin\mc{I}$. Let us consider a set $C\subset X\times X$ defined by the formula
\begin{linenomath*}
$$
C=\{(x,y): y\in(x+B)\bez B\}.
$$
\end{linenomath*}
Now, if we fix $y\in X$, then $C^y=\{x: y\notin B\,\, \&\,\, y\in (x+B)\}$ and $y\in (x+B)$ is equivalent to $x\in (y-B)$. Since we may  choose $y\notin B$ arbitrarily, $B^c$ is $\mc{I}$-positive, and the set $y-B$ is $\mc{I}$-positive, we have that $C^y$ is also $\mc{I}$-positive. Eventually, it means that $\{y: C^y\notin \mc{I}\}\notin\mc{I}$, so by the Fubini Property also $\{x: C_x\notin \mc{I}\}\notin\mc{I}$, so there exists $x$ such that
\begin{linenomath*}
$$
C_x=\{y: y\in (x+B)\bez B\}=(x+B)\bez B\notin\mc{I},
$$
\end{linenomath*}
what is excatly what we were looking for. If $((x+B)\bez B) \cap A\notin\mc{I}$, then $A-x$ is the set. If not, we repeat the procedure to find $x_1$ such that $(x_1+B)\bez ((x+B)\cup B)\notin\mc{I}$. We proceed as long as necessary, with a guarantee, that it stops at some countable step $\alpha$ due to the ccc property. Formally, let us define a sequence of pairwise disjoint sets
\begin{linenomath*}
\begin{align*}
B_0&=B,
\\
B_{\alpha}&=(x_{\alpha}+B)\bez \bigcup_{\beta<\alpha}B_\beta,
\end{align*}
\end{linenomath*}
where $x_\alpha$ is such that $B_{\alpha}$ is $\mc{I}$-positive. By the ccc the length of this sequence must be countable. Let $\kappa$ be the least ordinal such that $B_\kappa\cap A\notin\mc{I}$ (if for each $\alpha$ the set $B_\alpha\cap A$ was in $\mc{I}$ then $A$ also would be in $\mc{I}$). Then $B\cap (A-x_\alpha)$ is the set.
\end{proof}

\section{$[\RR]^{\leq\omega}$ DOES NOT HAVE THE FUBINI PROPERTY}

Let us start with a helpfull lemma (see also \cite{MSZ}, Lemma 2.1).
\begin{mylemma}\label{perfectmigacz}
Every perfect set $P\se \RR^n$ contains a perfect subset $Q$ such that $|Q\cap (Q+x)|\leq 1$ for each $x\neq 0$.
\end{mylemma}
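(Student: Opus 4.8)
The plan is to first reformulate the conclusion and then build $Q$ by a fusion argument. I would note that the condition ``$|Q\cap(Q+x)|\le 1$ for every $x\neq 0$'' is equivalent to the difference map $(a,b)\mapsto a-b$ being injective on the off-diagonal part of $Q\times Q$: if $y_1,y_2\in Q\cap(Q+x)$, then $(y_1,y_1-x)$ and $(y_2,y_2-x)$ are off-diagonal pairs in $Q^2$ sharing the nonzero difference $x$, so injectivity forces $y_1=y_2$, and conversely a collision of differences produces two points in some $Q\cap(Q+x)$. Thus it suffices to produce a perfect $Q\se P$ on which distinct off-diagonal ordered pairs have distinct differences.

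I would construct $Q$ as the body of a Cantor scheme $\{P_s:s\in 2^{<\omega}\}$ of nonempty perfect subsets of $P$, with $P_{s0},P_{s1}\se P_s$ disjoint and $\operatorname{diam}(P_s)\to 0$ along each branch, so that $Q=\bigcap_n\bigcup_{|s|=n}P_s$ is perfect. Each $P_s$ is taken of the form $P\cap\bar B(c_s,r_s)$ for a chosen center $c_s\in P$ and radius $r_s$; since $P$ is perfect, such a ball around a point of $P$ still meets $P$ in a perfect set, so the recursion never gets stuck. The key invariant maintained at each level $n$ is: for any two ordered pairs $(s,t)\neq(s',t')$ of level-$n$ nodes with $s\neq t$ and $s'\neq t'$, the difference rectangles $R_{s,t}:=P_s-P_t$ and $R_{s',t'}$ are disjoint.

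To secure this invariant at level $n$ I would first choose the $2^n$ centers $\{c_s:|s|=n\}$, each inside its parent cell, so that all their pairwise differences $c_s-c_t$ (for $s\neq t$) are distinct, a Sidon-type condition. Placing the centers one at a time, each partial configuration imposes only finitely many forbidden coincidences of the form $c_s-c_t=c_{s'}-c_{t'}$, and each such equation either is already false or pins the new center to a single point of $\RR^n$; since the perfect set available inside every parent cell is uncountable, these finitely many points can be avoided. Once the finitely many center-differences are distinct, I would shrink the radii $r_s$ so small that each $R_{s,t}$, which lies within distance $\operatorname{diam}(P_s)+\operatorname{diam}(P_t)$ of $c_s-c_t$, is confined to a neighbourhood of $c_s-c_t$ of radius less than half the minimal gap between distinct center-differences; then distinct center-differences yield disjoint rectangles. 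This step, reconciling the Sidon placement and the diameter control with the requirement that the new cells remain inside their parents, is the main obstacle, and it is precisely where perfectness of $P$ is used, to guarantee enough room inside every parent cell.

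Finally I would verify injectivity on $Q$. Given off-diagonal pairs $(a,b)\neq(c,d)$ in $Q^2$ with associated branches $\sigma,\tau,\sigma',\tau'\in 2^\omega$, we have $\sigma\neq\tau$, $\sigma'\neq\tau'$ and $(\sigma,\tau)\neq(\sigma',\tau')$, so for all sufficiently large $n$ the restrictions $s=\sigma|_n$, $t=\tau|_n$, $s'=\sigma'|_n$, $t'=\tau'|_n$ satisfy $s\neq t$, $s'\neq t'$ and $(s,t)\neq(s',t')$. The invariant then gives $R_{s,t}\cap R_{s',t'}=\emptyset$, while $a-b\in R_{s,t}$ and $c-d\in R_{s',t'}$, whence $a-b\neq c-d$. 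Thus the difference map is injective off the diagonal and, by the reduction above, $Q$ is the desired perfect set.
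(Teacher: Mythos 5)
Your proposal is correct and takes essentially the same approach as the paper: both build $Q$ as the body of a Cantor scheme of $2^n$ closed balls intersected with $P$, place the centers so as to avoid the finitely many difference coincidences $c_s-c_t=c_{s'}-c_{t'}$ (exactly the paper's forbidden neighbourhoods $\overline{B}\bigl(x^{n+1}_i+(x^{n+1}_l-x^{n+1}_j),3\epsilon_k\bigr)$, which is your Sidon condition with a quantitative margin), and then shrink the radii so that the difference sets of distinct ordered pairs of cells remain disjoint. Your reformulation via injectivity of the difference map and the final branch-by-branch verification merely make explicit what the paper leaves implicit after ``$Q=\bigcap_{n\in\omega}Q_n$ is the set.''
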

\begin{proof}
Let $P$ be a perfect set. Let $Q_0=\overline{B}(x_1^0,1)\cap P$ for arbitrarily chosen $x_1^0\in P$. Since each point in $P$ is a condensation point, $Q_0$ is perfect. Let us assume that we are at the step $n+1$ and we have a sequence of $n$ nested closed sets
\begin{linenomath*}
$$
Q_n\se Q_{n-1}\se ... \se Q_0,
$$
\end{linenomath*}
such that for each $i\leq n$ $Q_i$ consists of $2^i$ pairwise disjoint closed balls $\overline{B}(x^i_k,r_i)$, $0<k\leq 2^i$, intersected with $P$. Let us choose $x^{n+1}_1, x^{n+1}_2 \in \overline{B}(x^n_1,r_n)\cap P$ and for each $2<k\leq 2^{n+1}$ pick $\epsilon_k>0$ such that
\begin{linenomath*}
$$
B(x^{n}_{\lc \frac{k}{2} \rc}, r_n)\cap P\bez \bigcup_{0<i, j, l<k}\overline{B}(x^{n+1}_i+(x^{n+1}_{l}-x^{n+1}_j), 3\epsilon_k)
$$
\end{linenomath*}
is uncountable. Pick $x^{n+1}_k$ from this set. Now let us set $r_{n+1}>0$ smaller than every $\epsilon_k$ and small enough so that balls $\overline{B}(x^{n+1}_k,r_{n+1})$, $0<k\leq2^{n+1}$, are pairwise disjoint, contained in $Q_n$ and intersect $P$ on a nonempty set. Finally, set 
\begin{linenomath*}
$$
Q_{n+1}=\bigcup_{0<k\leq 2^{n+1}}\overline{B}(x^{n+1}_k,r_{n+1})\cap P.
$$
\end{linenomath*}
This finishes the construction and $Q=\bigcap_{n\in\omega}Q_n$ is the set.
\end{proof}

\begin{myth}
The ideal $[\RR]^{\leq\omega}$ does not posses the Fubini Property.
\begin{proof}
Let $Q$ be a perfect subset of $\RR$ with the property as in Lemma \ref{perfectmigacz}. Set
\begin{linenomath*}
$$
B=\{(x,y)\in\RR^2: x\in (-Q+y), y\in Q\}\bez \{0\}\times \RR
$$
\end{linenomath*}
Clearly, $B$ is Borel ($B=f^{-1}[Q\times Q]$, where $f(x,y)=(-x+y, y)$), each vertical slice is at most $1$-point and uncountably many horizontal slices are uncountable. 
\end{proof}

\end{myth}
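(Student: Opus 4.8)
The plan is to refute the Fubini Property for $\mc{I}=[\RR]^{\leq\omega}$ by producing a single Borel set $B\se\RR^2$ whose two families of slices behave completely differently: I want every vertical slice $B_x$ to be countable, so that $\{x:B_x\notin\mc{I}\}\in\mc{I}$, while uncountably many horizontal slices $B^y$ are uncountable, so that $\{y:B^y\notin\mc{I}\}\notin\mc{I}$. This directly contradicts the implication defining the Fubini Property. The only nontrivial ingredient is Lemma \ref{perfectmigacz}: I fix once and for all a perfect set $Q\se\RR$ with $|Q\cap(Q+x)|\le 1$ for every $x\neq 0$.

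The key idea is to shear the ``square'' $Q\times Q$ so that one axis sees whole translates of $Q$ while the transverse axis sees only intersections $Q\cap(Q+x)$. Concretely I would take the continuous map $f(x,y)=(-x+y,y)$ and set $B=f^{-1}[Q\times Q]\bez(\{0\}\times\RR)=\{(x,y):y-x\in Q\text{ and }y\in Q\}\bez(\{0\}\times\RR)$. Since $f$ is continuous and $Q$ is closed, $f^{-1}[Q\times Q]$ is closed; removing the closed line $\{0\}\times\RR$ leaves a Borel set (the intersection of a closed set with an open set), so $B$ is as required.

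It then remains to read off the slices. For the vertical slice at a fixed $x$: deleting the line gives $B_0=\emptyset$, and for $x\neq 0$ one has $B_x=\{y:y\in Q\text{ and }y-x\in Q\}=Q\cap(Q+x)$, which has at most one point by the choice of $Q$. Hence every vertical slice is at most a singleton and $\{x:B_x\notin\mc{I}\}=\emptyset\in\mc{I}$. For the horizontal slice at a fixed $y$: the clause $y\in Q$ forces $B^y=\emptyset$ whenever $y\notin Q$, while for $y\in Q$ we get $B^y=\{x\neq 0:y-x\in Q\}=(y-Q)\bez\{0\}$, which is uncountable because $Q$ is perfect. Thus $\{y:B^y\notin\mc{I}\}\es Q\notin\mc{I}$, and the Fubini implication fails for $B$.

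Given Lemma \ref{perfectmigacz}, the genuinely clever step, and the one I would regard as the main obstacle, is the choice of $B$ itself: once the shear of $Q\times Q$ is in hand, everything reduces to unwinding definitions. The design principle is that shearing breaks the symmetry between the two coordinate directions, turning one family of slices into full translates $y-Q$ of the uncountable set $Q$ and the transverse family into the difference-intersections $Q\cap(Q+x)$ that the Lemma pins down to at most one point. The deletion of $\{0\}\times\RR$ is only a convenience guaranteeing that literally every vertical slice is a singleton (otherwise $B_0$ would be all of $Q$); it costs each horizontal slice at most the single point $x=0$ and so leaves their uncountability intact.
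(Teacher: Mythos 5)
Your proposal is correct and takes essentially the same route as the paper: the identical set $B=f^{-1}[Q\times Q]\bez(\{0\}\times\RR)$ with the same shear $f(x,y)=(-x+y,y)$, with vertical slices reduced to $Q\cap(Q+x)$ via Lemma \ref{perfectmigacz} and horizontal slices being translates $(y-Q)\bez\{0\}$ for $y\in Q$. You merely spell out the slice computations that the paper leaves implicit, which is a faithful expansion rather than a different argument.
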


\end{document}